\newtheorem{theorem}{Theorem}[section]
\newtheorem{lemma}[theorem]{Lemma}
\theoremstyle{definition}
\newtheorem{definition}[theorem]{Definition}
\newtheorem{example}[theorem]{Example}
\newcommand{\E}{{\mathbb{E}}}
\newcommand{\R}{{\mathbb{R}}}
\theoremstyle{remark}
\let\@int\int \def\int{\displaystyle\@int}
\let\@lim\lim \def\lim{\displaystyle\@lim}
\let\@sum\sum \def\sum{\displaystyle\@sum}
\let\@sup\sup \def\sup{\displaystyle\@sup}
\let\@inf\inf \def\inf{\displaystyle\@inf}
\let\@cap\cap \def\cap{\displaystyle\@cap}
\let\@cup\cup \def\cup{\displaystyle\@cup}
\let\@max\max \def\max{\displaystyle\@max}
\let\@min\min \def\min{\displaystyle\@min}
\let\@frac\frac \def\frac{\displaystyle\@frac}
\let\@iint\iint \def\iint{\displaystyle\@iint}
\def\epsilon{\varepsilon}
\numberwithin{equation}{section}
\journal{}
\begin{document}

\begin{frontmatter}
\title{Hitting time for one class of Gaussian processes}
\author[a]{Qingsong Wang}
\ead{qswang21@mails.jlu.edu.cn}
\author[b]{A. A. Dorogovtsev}
\ead{andrey.dorogovtsev@gmail.com}

\address[a]{School of Mathematics, Jilin University, 130012, Changchun, P. R. China}
\address[b]{Institute of Mathematics, National Academy of Sciences of Ukraine, Ukraine}

\begin{abstract}
In our paper, we consider Gaussian processes of the form
\begin{equation*}
\begin{aligned}
\eta(t)=\int_{0}^{1}A(\mathbf{1}_{[0;1]})(s)dw(s), \quad t\in[0;1],
\end{aligned}
\end{equation*}
where $w(t),t\geq0$ is a standard Wiener process in $\mathbb{R}^{d}$ and $A$ is continuous linear operator in $L_{2}([0;1])$. For the domain $\mathcal{D}\subset{\mathbb{R}}^{d}$ with smooth boundary $\partial\mathcal{D}$ the following probability is considered for $x\in\mathcal{D}$
\begin{equation*}
\begin{aligned}
P\{\exists\tau\in[0;1]:x+\eta(\tau)\in\partial \mathcal{D}\}.
\end{aligned}
\end{equation*}
It is well known that for Wiener process itself such hitting probabilities satisfy the parabolic boundary value problem for heat equation. Note that in general process $\eta$ is non-Markovian and even can be non-semi-martingale. Despite of this $\eta$ can be considered as an application of second quantization operator to $w$. Correspondingly we obtain a representation for hitting probabilities as a series of multiple integrals with the kernel obtained from Green function for domain $\mathcal{D}$ and tensor powers of operator $A$.
\end{abstract}
\begin{keyword}
anticipating problem; second quantization; Gaussian process.

\MSC{60G15, 60J45, 60J65.}
\end{keyword}
\end{frontmatter}

\section{Introduction}

In this paper we consider hitting times and hitting probabilities for the special class of Gaussian random processes which are called the integrators. These processes were introduced by A. A. Dorogovtsev in \cite{Dor06} for the purposes related to the theory of stochastic integration and anticipating stochastic differential equations. Integrators includes Brownian motion, Brownian bridge, Ornstein- Uhlenbeck process and many other frequently used in applications processes. It was discovered in that integrators can be used to model linear polymers. Such important characteristics as localtimes of self-inersection and non-normalized local times of self-intersection also were studied for integrator. Initially integrator was defined as follows.
\begin{definition}\cite{Dor06}
Centered Gaussian process $\eta(t),t\in[0;1]$ is called by integrator if there exists such positive $C>0$ that for arbitrary $n\geq1$, partition $0=t_{0}<\cdots<t_{n}=1$ and real numbers $a_{1},\cdots,a_{n}$
\begin{equation*}
\begin{aligned}
\E(\sum_{k=0}^{n-1}a_{k}(\eta(t_{k+1})-\eta(t_{k})))^{2}\leq C\sum_{k=0}^{n-1}a_{k}^{2}\Delta t_{k}^{2}.
\end{aligned}
\end{equation*}
\end{definition}
This condition means that for every function $f\in L_{2}([0;1])$ the integral
\begin{equation*}
\begin{aligned}
\int_{0}^{1}f(t)d\eta(t)
\end{aligned}
\end{equation*}
is well-defined as a limit in the square mean of the integrals from the step functions. Also, there is anther equivalent definition of the integrators through the Gaussian white noise in Hilbert space. This definition will be used later.

\begin{lemma}
Let $\eta(t),t\in[0;1]$ be an integrator. Then there exist a Wiener process $w(t),t\in[0;1]$ and a continuous linear operator $A$ in the space $L_{2}([0;1])$ such that
\begin{equation*}
\begin{aligned}
\eta(t)=\int_{0}^{1}A(\mathbf{1}_{[0;t]})(s)dw(s).
\end{aligned}
\end{equation*}
\end{lemma}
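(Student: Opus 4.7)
The idea is to recognise that the integrator inequality turns ``integration against $\eta$'' into a bounded linear operator on $L_2([0;1])$, and then to recover $w$ and $A$ from a polar decomposition of this operator.

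\emph{Step 1: the integral operator.} On step functions define
$T_0\bigl(\sum_k a_k \mathbf{1}_{[t_k,t_{k+1})}\bigr) = \sum_k a_k\bigl(\eta(t_{k+1})-\eta(t_k)\bigr)$.
The integrator condition gives $\|T_0 f\|_{L_2(\Omega)}^2 \le C\|f\|_{L_2([0;1])}^2$, so $T_0$ extends uniquely by density to a bounded linear operator $T:L_2([0;1]) \to L_2(\Omega)$ satisfying $T(\mathbf{1}_{[0;t]}) = \eta(t)$.

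\emph{Step 2: extracting $A$.} Set $A := (T^*T)^{1/2}$, a bounded, non-negative, self-adjoint operator on $L_2([0;1])$. It satisfies $\|Af\|_{L_2([0;1])} = \|Tf\|_{L_2(\Omega)}$ for every $f$. Consequently the formula $V(Af) := Tf$ defines an isometric linear map from $\operatorname{range}(A)$ into $L_2(\Omega)$ (well-defined because $A(f-g)=0$ forces $\|T(f-g)\|=0$). Extend $V$ by continuity to an isometry from $\overline{\operatorname{range}(A)} = \ker(A)^\perp$ into the Gaussian Hilbert space generated by~$\eta$.

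\emph{Step 3: building the Wiener process.} If $\ker(A) \neq \{0\}$, enlarge the probability space by an independent Gaussian white noise $W$ on $\ker(A)$. For $g \in L_2([0;1])$ with orthogonal decomposition $g = g_1 + g_2$, $g_1 \in \ker(A)^\perp$ and $g_2 \in \ker(A)$, set $I(g) := V(g_1) + W(g_2)$. Then $I$ is a linear isometry from $L_2([0;1])$ into a Gaussian subspace of $L_2(\Omega')$. Define $w(t) := I(\mathbf{1}_{[0;t]})$. Gaussianity together with $\E w(t)w(s) = \langle \mathbf{1}_{[0;t]},\mathbf{1}_{[0;s]}\rangle_{L_2} = t\wedge s$ identifies $w$ as a standard Wiener process, and by construction $\int_0^1 f\,dw = I(f)$. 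In particular $\int_0^1 A(\mathbf{1}_{[0;t]})(s)\,dw(s) = I(A\mathbf{1}_{[0;t]}) = V(A\mathbf{1}_{[0;t]}) = T(\mathbf{1}_{[0;t]}) = \eta(t)$, which is the required representation. For the $\R^d$-valued integrator appearing elsewhere in the paper, one repeats the construction componentwise with $d$ independent copies of $W$.

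\emph{Main obstacle.} The subtle point is precisely the case $\ker(A)\neq\{0\}$: the isometry $V$ alone is defined only on $\ker(A)^\perp$ and does not determine a Wiener process on all of $L_2([0;1])$. One must augment the probability space by an independent white noise on $\ker(A)$ so that the extended map $I$ has full domain and produces genuine standard Brownian motion rather than a degenerate Gaussian process. Verifying that this auxiliary enlargement is harmless (i.e. that the joint law of $\eta$ is unchanged and the resulting $w$ is a true Wiener process) is where the main care is required.
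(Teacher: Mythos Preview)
The paper does not actually supply a proof of this lemma: it is stated in the introduction as a known equivalent description of integrators, with the surrounding text implicitly attributing it to \cite{Dor06}. There is therefore no in-paper argument to compare against.

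Your proof via polar decomposition of the integration map $T:L_2([0;1])\to L_2(\Omega)$ is correct and is the standard route to such representations. Setting $A=(T^*T)^{1/2}$ and factoring $T=VA$ with $V$ isometric is exactly how one recovers the ``white noise'' picture from the integrator inequality; the verification $I(A\mathbf{1}_{[0;t]})=V(A\mathbf{1}_{[0;t]})=T(\mathbf{1}_{[0;t]})=\eta(t)$ goes through because $A$ is self-adjoint, so $A\mathbf{1}_{[0;t]}\in\overline{\operatorname{range}A}=\ker(A)^{\perp}$ and the $W$-component vanishes. Your identification of the main obstacle is also accurate: the lemma as stated is silent about enlarging the probability space, but when $\ker A\neq\{0\}$ an auxiliary independent white noise on $\ker A$ is genuinely needed to produce a \emph{standard} Wiener process, and one should read the lemma with that tacit convention.
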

Consider a case when $\|A\|<1$. Denote
\begin{equation*}
\begin{aligned}
&Q=(I-A^{*}A)^{\frac{1}{2}},\\
&\zeta(T)=\int_{0}^{1}Q(\mathbf{1}_{[0;1]})(s)dw'(s),
\end{aligned}
\end{equation*}
where $w'$ is a Wiener process independent from $w$. It can be easily checked that, the sum
\begin{equation*}
\begin{aligned}
\widetilde{w}(t)=\eta(t)+\zeta(t), \quad t\in[0;1]
\end{aligned}
\end{equation*}
is a Wiener process. Suppose that $\Phi:C([0;1])\rightarrow \R$ is a bounded measurable function. Then, formally
\begin{equation*}
\begin{aligned}
\E\Phi(\eta)=\E(\Phi(\widetilde{w})|\zeta=0).
\end{aligned}
\end{equation*}
Note that, the conditional expectation with respect to $\zeta$ can be calculated as a second quantization operator for functionals from $\widetilde{w}$. Namely, since $\zeta$ was defined as
\begin{equation*}
\begin{aligned}
\zeta(t)=\int_{0}^{1}Q(\mathbf{1}_{[0;1]})(s)dw'(s),
\end{aligned}
\end{equation*}
then for arbitrary $h\in L_{2}([0;1])$
\begin{equation*}
\begin{aligned}
\int_{0}^{1}h(s)d\widetilde{w}(s)=\int_{0}^{1}(Ah)(s)dw(s)+\int_{0}^{1}(Qh)(s)dw'(s),
\end{aligned}
\end{equation*}
and
\begin{equation*}
\begin{aligned}
\int_{0}^{1}(Qh)(s)dw'(s)=\int_{0}^{1}h(s)d\zeta(s),
\end{aligned}
\end{equation*}
is measurable with respect to $\zeta$. Consequently, the conditional expectation of the stochastic exponent
\begin{equation*}
\begin{aligned}
&\E(\exp\{\int_{0}^{1}h(s)d\widetilde{w}(s)-\frac{1}{2}\int_{0}^{1}h^{2}(s)ds\}|\zeta)\\
&=\exp\{\int_{0}^{1}(Qh)(s)dw'(s)-\frac{1}{2}\int_{0}^{1}(Qh)^{2}(s)ds\}\\
&=\Gamma(Q)\exp\{\int_{0}^{1}h(s)dw'(s)-\frac{1}{2}\int_{0}^{1}h^{2}(s)ds\}.
\end{aligned}
\end{equation*}
Here $\Gamma(Q)$ is an operator of second quantization acting in the space of square integrable functionals from $w'$ \cite{Sim74}. Since the previous equality holds for all stochastic exponents, it remains to be ture for all square integrable functionals from $w'$ . Let $\alpha$ be such functional and has an It$\hat{o}$-Wiener expansion
\begin{equation*}
\begin{aligned}
\alpha= \E \alpha + \sum_{n=1}^{\infty}\int_{\Delta_{n}}a_{n}(t_{1},\cdots,t_{n})dw'(t_{1})\cdots dw'(t_{n}).
\end{aligned}
\end{equation*}
Then
\begin{equation*}
\begin{aligned}
\Gamma(Q)\alpha= \E \alpha + \sum_{n=1}^{\infty}\int_{\Delta_{n}}Q^{\otimes n}a_{n}(t_{1},\cdots,t_{n})dw'(t_{1})\cdots dw'(t_{n}).
\end{aligned}
\end{equation*}
Calculation $\Gamma(Q)\alpha$ when $\zeta=0$ is the same as calculation when $w'=0$. Suppose that the kernel $b_{n}\in L_{2}([0;1]^{n})$ is symmetric. To find the value of (when it is well-defined)
\begin{equation*}
\begin{aligned}
\int_{\Delta_{n}}b_{n}(t_{1},\cdots,t_{n})dw'(t_{1})\cdots dw'(t_{n}),
\end{aligned}
\end{equation*}
let us prosed as follows. Let $\{e_{k};k\geq1\}$ be am orthonormal basis in $L_{2}([0;1])$. Denote by
\begin{equation*}
\begin{aligned}
\xi_{k}=\int_{0}^{1}e_{k}(s)dw'(s), \quad k\geq1
\end{aligned}
\end{equation*}
\begin{equation*}
\begin{aligned}
b_{k_{1},\cdots,k_{n}}=\int_{0}^{1}\stackrel{n}{\cdots}\int_{0}^{1}b_{n}(t_{1},\cdots,t_{n})e_{k_{1}}(t_{1})\cdots e_{k_{n}}(t_{n})dt_{1}\cdots dt_{n}, \quad k_{1},\cdots,k_{n}\geq1.
\end{aligned}
\end{equation*}
Then
\begin{equation*}
\begin{aligned}
&\int_{\Delta_{n}}b_{n}(t_{1},\cdots,t_{n})dw'(t_{1})\cdots dw'(t_{n})\\
&=\frac{1}{n!}\int_{0}^{1}\stackrel{n}{\cdots}\int_{0}^{1}b_{n}(t_{1},\cdots,t_{n})dw'(t_{1})\cdots dw'(t_{n})\\
&=\frac{1}{n!}\sum_{k_{1},\cdots,k_{n}=1}^{\infty}b_{k_{1},\cdots,k_{n}} \xi_{k_{1}}*\cdots*\xi_{k_{n}}.
\end{aligned}
\end{equation*}
Here the multiple integral
\begin{equation*}
\begin{aligned}
\int_{0}^{1}\stackrel{n}{\cdots}\int_{0}^{1}b_{n}(t_{1},\cdots,t_{n})dw'(t_{1})\cdots dw'(t_{n})
\end{aligned}
\end{equation*}
is defined in Skorokhod sense and
\begin{equation*}
\begin{aligned}
\xi_{k_{1}}*\cdots*\xi_{k_{n}},
\end{aligned}
\end{equation*}
is a Wick product of $\xi_{k_{1}},\cdots,\xi_{k_{n}}$. Such product is obtained from the usual $\xi_{k_{1}},\cdots,\xi_{k_{n}}$ by collecting the same multiplies and substituiting the power by Hermite polynomial of the same degree. For example,
\begin{equation*}
\begin{aligned}
\xi_{1}*\xi_{2}*\xi_{3}=H_{2}(\xi_{1})\cdot\xi_{2}=(\xi^{2}_{1}-1)\xi_{2}.
\end{aligned}
\end{equation*}
Hence the value
\begin{equation*}
\begin{aligned}
&\int_{0}^{1}\stackrel{n}{\cdots}\int_{0}^{1}b_{n}(t_{1},\cdots,t_{n})dw'(t_{1})\cdots dw'(t_{n})|_{w'=0}\\
&=\sum_{k_{1},\cdots,k_{n}=1}^{\infty}b_{k_{1},\cdots,k_{n}} \xi_{k_{1}}*\cdots*\xi_{k_{n}}|_{\xi_{1}=\xi_{2}=\cdots=0}.
\end{aligned}
\end{equation*}
Using the generating function for Hermite polynomials
\begin{equation*}
\begin{aligned}
\exp(xy-\frac{1}{2}x^{2})=\sum_{n=0}^{\infty}\frac{x^{n}}{n!}H_{n}(y),
\end{aligned}
\end{equation*}
one can easily check that for $n\geq0$
\begin{equation*}
\begin{aligned}
H_{2n+1}(0)=0, \quad H_{2n}(0)=(-1)^{n}(2n-1)!!.
\end{aligned}
\end{equation*}
Consequently,
\begin{equation*}
\begin{aligned}
&\xi_{k_{1}}*\cdots*\xi_{k_{n}}|_{\xi_{1}=\xi_{2}=\cdots=0}\\
&=\E(i\xi_{k_{1}})\cdots(i\xi_{k_{n}}),
\end{aligned}
\end{equation*}
where $i^{2}=-1$. Hence (if the corresponding series converges and it is allowed to change expectation and infinite sum)
\begin{equation*}
\begin{aligned}
&\int_{0}^{1}\stackrel{n}{\cdots}\int_{0}^{1}b_{n}(t_{1},\cdots,t_{n})dw'(t_{1})\cdots dw'(t_{n})|_{w'=0}\\
&=\E\sum_{k_{1},\cdots,k_{n}=1}^{\infty}b_{k_{1},\cdots,k_{n}}(i\xi_{k_{1}})\cdots(i\xi_{k_{n}})\\
&=i^{n} \E\int_{0}^{1}\stackrel{n}{\cdots}\int_{0}^{1}b_{n}(t_{1},\cdots,t_{n})dw'(t_{1})\circ dw'(t_{2})\circ \cdots \circ dw'(t_{n}).
\end{aligned}
\end{equation*}
Where in the right hand side the Ogawa symmetric integral is used. This integral in simple cases coinsides with the Stratonowich integral. So now our strategy in the calculation of probabilities related to the integrator $\eta$ is the following. Firstly, present the probability as
\begin{equation*}
\begin{aligned}
\E\Phi(\eta).
\end{aligned}
\end{equation*}
Then find operator
\begin{equation*}
\begin{aligned}
Q=\sqrt{I-A^{*}A}
\end{aligned}
\end{equation*}
and
\begin{equation*}
\begin{aligned}
\Gamma(Q)\Phi(w),
\end{aligned}
\end{equation*}
as an It$\hat{o}$-Wiener series
\begin{equation*}
\begin{aligned}
\Gamma(Q)\Phi(w)=\sum_{0}^{\infty}\int_{\Delta_{n}}b_{n}(t_{1},\cdots,t_{n})dw(t_{1}) \cdots dw(t_{n}).
\end{aligned}
\end{equation*}
Finally,
\begin{equation*}
\begin{aligned}
\E\Phi(\eta)=\E \sum_{0}^{\infty}i^{n}\int_{\Delta_{n}}b_{n}(t_{1},\cdots,t_{n})dw(t_{1})\circ dw(t_{2})\circ \cdots \circ dw(t_{n}).
\end{aligned}
\end{equation*}
Correspondingly to such strategy the paper is divided into two sections. In the first section we consider the functionals from $w$ and its second quantization, in the second section the concrete example and estimations for expectation of symmetric integrals series are given.

\section{Second quantization for functionals from Brownian motion in the tube}

In this section we calculate the expectation of functionals from integrators. Let us suppose that the function
\begin{equation*}
\begin{aligned}
\Phi:C([0;1])\rightarrow\R,
\end{aligned}
\end{equation*}
is bounded and measurable. Consider an integrator $x$ which is generated by the operator $A$ with the norm not greater than one. Define operator $R=\sqrt{I-A^{*}A}$ as before and construct the integrator $y$ based on $R$. Then, formally,
\begin{equation*}
\begin{aligned}
\E\Phi(x)=\E(\Phi(x+y(w_{1}))|y(w_{1})=0)=\Gamma(R)\Phi(\widetilde{w})_{\widetilde{w}=0}.
\end{aligned}
\end{equation*}
Of cause to make this equalities rigorous we need some good properties of the conditional expectation
\begin{equation*}
\begin{aligned}
\E(\Phi(x+y(w_{1}))|y(w_{1})),
\end{aligned}
\end{equation*}
and the random variable
\begin{equation*}
\begin{aligned}
\Gamma(R)\Phi(\widetilde{w}),
\end{aligned}
\end{equation*}
as the functions from $y$. In the next sections we will consider the cases where they have continuous versions and, consequently, make the formal equality under $y=0$ rigorous. But now one can check the simple example, when instead of Wiener processes we use standard Gaussian variables.

\begin{example}
Consider standard Gaussian variable $\xi$. The It$\hat{o}$-Wiener expansion of square-integrable functionals from $\xi$ now is expansion via the orthogonal system of Hermite polynomials
\begin{equation*}
\begin{aligned}
\alpha=\sum_{n=0}^{\infty}a_{n}H_{n}(\xi).
\end{aligned}
\end{equation*}
Here
\begin{equation*}
\begin{aligned}
H_{n}(u)=(-1)^{n}e^{\frac{u^{2}}{2}}(\frac{d}{du})^{n}e^{-\frac{u^{2}}{2}}, \quad n\geq0.
\end{aligned}
\end{equation*}
Since $\xi$ is a random variable, then instead of $L_{2}([0;1])$ the one-dimensional space $\R$ must be taken. The operators in this space are only multiplications on the constant. It can be checked that for $|c|\leq1$
\begin{equation*}
\begin{aligned}
\Gamma(c)\alpha=\sum_{n=0}^{\infty}c^{n}a_{n}H_{n}(\xi).
\end{aligned}
\end{equation*}
Now the analog of integrator related to the operator of multiplication on $c$ is simply $c\xi$. Let us find the expectation of $H_{n}(c\xi)$ using the proposed method. Consider a standard Gaussian variable $\xi'$ which is independent from $\xi$, put $c'=\sqrt{1-c^{2}}$. Then
\begin{equation*}
\begin{aligned}
\E(H_{n}(c\xi+c'\xi')|\xi')=(c')^{n}H_{n}(\xi'),
\end{aligned}
\end{equation*}
where both sides are continuous functions from $\xi'$. Hence
\begin{equation*}
\begin{aligned}
\E(H_{n}(c\xi)|\xi')=(c')^{n}H_{n}(0).
\end{aligned}
\end{equation*}
For $n=2k+1$
\begin{equation*}
\begin{aligned}
H_{n}(0)=0
\end{aligned}
\end{equation*}
and for $n=2k$
\begin{equation*}
\begin{aligned}
\E(H_{2k}(c\xi))=(2k-1)!!(1-c^{2})^{k}.
\end{aligned}
\end{equation*}
\end{example}

The same we suppose to do in general case. Of cause existence of continuous version of the studied functionals is the problem, which will be solved in some partial cases. So, the main aim of the paper is to find the probability when multi-dimensional integrator reach the boundary of the domain $\mathcal{D}$ before the time horizon one. To do this using above mentioned approach we need to check continuity of the corresponding conditional probability and second quantization. The last part establishes the existence of continuous versions and the main result.

\section{It$\hat{o}$-Wiener expansion of the hitting event}

As it was described in the introduction we are looking for the It$\hat{o}$-Wiener expansion of the following functional from the standard  Wiener process $w$ in the domain $\mathcal{D}\subset\R^{d}$ which has a smooth boundary ($\mathcal{D}$ can be unbounded, for example, $\mathcal{D}$ can be a complement to a ball). For $x\in \mathcal{D}$ and $s\in[0;1]$ put
\begin{equation*}
\begin{aligned}
W_{x,s}=1,
\end{aligned}
\end{equation*}
if there exists such $\tau\in[s;1]$ that
\begin{equation*}
\begin{aligned}
x+w(\tau)-w(s)\in\partial \mathcal{D}.
\end{aligned}
\end{equation*}

The expectation $\E W_{x,s}$ is a probability that Wiener process starting from $x$ will hit boundary $\partial \mathcal{D}$ up to the time $1$. Since our aim is to find such probability for integrator we will find firstly the It$\hat{o}$-Wiener expansion for $W_{x,s}$ with the Fourier-Wiener transform. Recall that now it is defined with the help of stochastic exponent
\begin{equation*}
\begin{aligned}
\varepsilon(h)=\exp\{\int_{0}^{1}(h(t),dw(t))-\frac{1}{2}\int_{0}^{1}\|h(t)\|^{2}dt\},
\end{aligned}
\end{equation*}
which is defined for $h\in L_{2}([0;1],\R^{d})$, $(\cdot,\cdot)$ and $\|\cdot\|$ are the scalar product and norm in $\R^{d}$. The Fourier-Wiener transform of the square integrable functional $\alpha$ from $w$ is defined as
\begin{equation*}
\begin{aligned}
\mathcal{T}(\alpha)(h)=\E\alpha\varepsilon(h).
\end{aligned}
\end{equation*}
 It is related to It$\hat{o}$-Wiener expansion of $\alpha$ in the following way
\begin{equation*}
\begin{aligned}
\mathcal{T}(\alpha)(h)=\sum_{n=0}^{\infty}\int_{0}^{1}\stackrel{n}{\cdots}\int_{0}^{1}(a_{n}(t_{1},\cdots,t_{n}),h(t_{1})\otimes\cdots\otimes h(t_{n}))_{n}dt_{1}\cdots dt_{n}.
\end{aligned}
\end{equation*}
Here $\otimes$ denote tensor product in $\R^{d}$ and $(\cdot,\cdot)_{n}$ is a scalar product in $(\R^{d})^{\otimes n}$. Note that due to equality
\begin{equation*}
\begin{aligned}
\E\alpha^{2}=\sum_{n=0}^{\infty}\int_{0}^{1}\stackrel{n}{\cdots}\int_{0}^{1}\|a_{n}(t_{1},\cdots,t_{n})\|^{2}_{n}dt_{1}\cdots dt_{n},
\end{aligned}
\end{equation*}
where $\|\cdot\|_{n}$ is the norm in $(\R^{d})^{\otimes n}$, the Fourier-Wiener transform $\mathcal{T}(\alpha)$ is an analitic function on $H$. Consequently, to find It$\hat{o}$-Wiener expansion of $\alpha$ it is enough to find Taylor expansion of $\mathcal{T}(\alpha)$. In our case we will find $\mathcal{T}(W_{x,s})(h)$ for $h\in C^{1}([0;1],\R^{d})$. The following statement holds.

\begin{lemma}\label{lemma3.1}
For fixed $h\in C^{1}([0;1],\R^{d})$ the function
\begin{equation*}
\begin{aligned}
U^{h}_{x,t}=\mathcal{T}(W_{x,t})(h),
\end{aligned}
\end{equation*}
satisfies boundary value problem
\begin{equation*}
\begin{aligned}
    \left\{
      \begin{array}{lcl}
         \frac{\partial}{\partial{t}}U^{h}_{x,t}=-\frac{1}{2}\triangle_{x}U^{h}_{x,t}-(h(t),\nabla_{x}U_{x,t}), \\
         U^{h}_{x,1}=0, \quad x\in \mathcal{D},\\
         U_{x,t}=1, \quad x\in \partial \mathcal{D}.\\
      \end{array}
    \right.
\end{aligned}
\end{equation*}
This means that $U$ is one time continuously differentiable with respect to $t$ and two times with respect to $x$ inside $\mathcal{D}\times[0;1]$ and continuous on $\mathcal{D}\times[0;1]$ and satisfying equation inside $\mathcal{D}\times[0;1]$.
\end{lemma}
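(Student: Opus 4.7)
My plan is to identify $U^h_{x,t}$ with the hitting probability of a Brownian motion with deterministic drift $h$, for which the backward Kolmogorov equation is classical.

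First, I would split the stochastic exponential as $\varepsilon(h)=\varepsilon_t^{-}(h)\,\varepsilon_t^{+}(h)$, where $\varepsilon_t^{\pm}(h)$ are built from the restrictions of $h$ to $[0,t]$ and $[t,1]$ respectively. Since $W_{x,t}$ depends only on the increments of $w$ over $[t,1]$, it is independent of $\varepsilon_t^{-}(h)$, and the identity $\E\varepsilon_t^{-}(h)=1$ yields
\[
U^h_{x,t}=\E\bigl[W_{x,t}\,\varepsilon_t^{+}(h)\bigr].
\]
By Girsanov's theorem, under the probability $Q$ with density $\varepsilon_t^{+}(h)$ on the $\sigma$-algebra generated by the increments of $w$ over $[t,1]$, the process $\widetilde{w}(\tau):=w(\tau)-w(t)-\int_t^{\tau}h(s)\,ds$, $\tau\in[t,1]$, is a standard Brownian motion. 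Rewriting the event in the definition of $W_{x,t}$ in terms of $\widetilde{w}$ yields
\[
U^h_{x,t}=P\Bigl\{\exists\,\tau\in[t,1]:\; x+w(\tau)-w(t)+\int_t^{\tau}h(s)\,ds\in\partial\mathcal{D}\Bigr\},
\]
the probability before time one that a Brownian motion with drift $h$, starting from $x$ at time $t$, hits $\partial\mathcal{D}$; in particular $0\le U^h_{x,t}\le 1$.

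Next, the generator of the drifted diffusion $dX_\tau=h(\tau)\,d\tau+dw(\tau)$ is $L_t=\tfrac{1}{2}\Delta_x+(h(t),\nabla_x)$, so the classical backward Kolmogorov equation for hitting probabilities is $\partial_t U+L_t U=0$, which is exactly the PDE in the statement after moving $\partial_t U$ to one side. The boundary data follow directly from the probabilistic definition: $W_{x,t}\equiv 1$ for $x\in\partial\mathcal{D}$, and $W_{x,1}=0$ for $x\in\mathcal{D}$ since the search interval degenerates. To justify this rigorously, I would invoke classical solvability of the backward Dirichlet problem for $h\in C^1$ and smooth $\partial\mathcal{D}$, producing a solution $V\in C^{1,2}(\mathcal{D}\times[0,1))\cap C(\overline{\mathcal{D}}\times[0,1])$ by Schauder estimates combined with local barriers, and then apply It\^o's formula to $V(X_\tau,\tau)$ stopped at $\sigma=\inf\{\tau\ge t:X_\tau\in\partial\mathcal{D}\}\wedge 1$ to obtain $V(x,t)=\E[V(X_\sigma,\sigma)]=P\{\sigma<1\}=U^h_{x,t}$. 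This identifies $V$ with $U^h$ and transfers the regularity and the PDE.

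The main obstacle is the regularity of $V$ up to the parabolic boundary of $\mathcal{D}\times[0,1]$, especially near the corner $\partial\mathcal{D}\times\{1\}$. The smoothness of $\partial\mathcal{D}$ provides an exterior ball condition at every boundary point, which supplies the local barriers needed for boundary continuity, and $h\in C^1$ gives the H\"older coefficients required for interior Schauder estimates. When $\mathcal{D}$ is unbounded (e.g.\ the complement of a ball) the a priori bound $\|V\|_\infty\le 1$ automatically selects the bounded solution and gives uniqueness, so no additional growth condition is needed; hence the classical theory applies and closes the argument.
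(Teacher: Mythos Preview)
Your proposal is correct and follows essentially the same route as the paper: use Girsanov (the stochastic exponential $\varepsilon(h)$ as a density) to identify $U^h_{x,t}$ with the hitting probability of the drifted process $x+w(\tau)-w(t)+\int_t^\tau h(s)\,ds$, and then invoke the classical backward parabolic boundary value problem for that probability. The paper's own proof is just these two sentences followed by a citation to Bass, \emph{Probabilistic Techniques in Analysis}; your explicit splitting $\varepsilon(h)=\varepsilon_t^-(h)\varepsilon_t^+(h)$, the It\^o/verification argument, and the discussion of barriers and Schauder estimates are a more detailed unpacking of what that citation covers, not a different method.
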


\begin{proof}
To check the statement of the lemma it is enough to recall that the stochastic exponent $\varepsilon(h)$ is a probability density and
\begin{equation*}
\begin{aligned}
\mathcal{T}(W_{x,t})(h)=\E W_{x,t}\varepsilon(h)=\E W^{h}_{x,t},
\end{aligned}
\end{equation*}
where $W^{h}_{x,t}$ is constructed from the process
\begin{equation*}
\begin{aligned}
w(t)+\int_{0}^{t}h(s)ds, \quad t\in[0;1].
\end{aligned}
\end{equation*}
The expectation
\begin{equation*}
\begin{aligned}
\E W^{h}_{x,t}
\end{aligned}
\end{equation*}
is equal to the probability that there exists such $\tau\in[t;1]$ for which
\begin{equation*}
\begin{aligned}
x+w(\tau)-w(t)+\int_{t}^{\tau}h(s)ds\in\partial \mathcal{D}.
\end{aligned}
\end{equation*}
Now the statement of the lemma becomes to be known \cite{Ric94}. Lemma is proved.
\end{proof}

The statement of the Lemma \ref{lemma3.1} allows to find $\mathcal{T}(W_{x,t})(h)$ using Duhamel series. Let $G(s,t,x,y), 0\leq s\leq t \leq 1, x,y\in\overline{\mathcal{D}}$ is a Green function \cite{Ric94} for heat equation in $\mathcal{D}$. Denote by $G'(s,t,x,y)$ the normal interior derivative of $G$ at the point $y\in\partial \mathcal{D}$. Then
\begin{equation*}
\begin{aligned}
U^{0}_{x,t}=\int_{t}^{1}\int_{\partial \mathcal{D}}G'(t,r,x,y)\delta(dy)dr.
\end{aligned}
\end{equation*}
Now Duhamel series for $U^{h}_{x,t}$ looks like
\begin{equation*}
\begin{aligned}
U^{h}_{x,t}=&U^{0}_{x,t}+\sum_{n=1}^{\infty}\int_{t\leq r_{1}\leq\cdots\leq r_{n}\leq1}\int_{\partial \mathcal{D}}\stackrel{n}{\cdots}\int_{\partial \mathcal{D}}G(t,r_{1},x,z_{1})\\
&(\nabla_{z_{1}}G(r_{1},r_{2},z_{1},z_{2})\otimes\cdots\otimes\nabla_{z_{n}}U^{0}_{z_{n},r_{n}},h(r_{1})\otimes\cdots\otimes h(r_{n}))_{n}dz_{1}\cdots dz_{n}dr_{1}\cdots dr_{n}.
\end{aligned}
\end{equation*}
As it was mentioned above the summands in obtained series gives us the kernels from the It$\hat{o}$-Wiener expansion of $W_{x,t}$.

\section{Existence of continuous version}

In this section we establish the existence of continuous version of the functionals related to conditional expectation of $W_{x,t}$ and its It$\hat{o}$-Wiener expansion.

\begin{lemma}
For fixed $x\in \mathcal{D}, t\in[0;1]$ $W^{h}_{x,t}$ is continuous with respect to $h\in C([0;1], \R^{d})$ i.e. for fixed $h_{0}\in C([0;1], \R^{d})$
\begin{equation*}
\begin{aligned}
&W^{h}_{x,t}\rightarrow W^{h_{0}}_{x,t}, \quad a.s.,
&\|h-h_{0}\|_{\infty}\rightarrow0.
\end{aligned}
\end{equation*}
Here
\begin{equation*}
\begin{aligned}
\|h\|_{\infty}=\max_{[0;1]}\|h(s)\|.
\end{aligned}
\end{equation*}
\end{lemma}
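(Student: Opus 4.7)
The plan is to convert the a.s.\ continuity of $W^{h}_{x,t}$ into a geometric statement about a single realization of the shifted path
\[
\xi^{h}(\tau)=x+w(\tau)-w(t)+\int_{t}^{\tau}h(s)\,ds,\qquad \tau\in[t;1],
\]
for which $W^{h}_{x,t}=\mathbf{1}\{\xi^{h}([t;1])\cap\partial\mathcal{D}\neq\emptyset\}$. Since $\sup_{\tau\in[t;1]}\|\xi^{h}(\tau)-\xi^{h_{0}}(\tau)\|\leq (1-t)\|h-h_{0}\|_{\infty}$, the map $h\mapsto\xi^{h}$ is continuous in sup-norm on each realization. The functional that records intersection with the closed set $\partial\mathcal{D}$ is upper semicontinuous in the uniform topology on paths, while the functional that records intersection with the open set $\mathcal{D}^{c}$ is lower semicontinuous. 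Continuity of $W^{h}_{x,t}$ at $h_{0}$ will therefore follow once these two indicators are shown to agree on $\xi^{h_{0}}$ a.s.

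A purely pathwise dichotomy handles the two easy cases. On the event $\{\xi^{h_{0}}([t;1])\subset\mathcal{D}\}$, compactness of $\xi^{h_{0}}([t;1])$ forces positive distance to $\partial\mathcal{D}$, so $W^{h}_{x,t}=0=W^{h_{0}}_{x,t}$ for all $h$ close enough to $h_{0}$. On the event that $\xi^{h_{0}}(\tau)\in\mathcal{D}^{c}$ for some $\tau\in[t;1]$, continuity of $\xi^{h}$ in $h$ yields $\xi^{h}(\tau)\in\mathcal{D}^{c}$ for nearby $h$, and the intermediate value argument forces $\xi^{h}$ to cross $\partial\mathcal{D}$, so $W^{h}_{x,t}=1=W^{h_{0}}_{x,t}$. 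What remains is the probability-zero claim
\[
P\bigl(\xi^{h_{0}}([t;1])\cap\partial\mathcal{D}\neq\emptyset,\ \xi^{h_{0}}([t;1])\subset\overline{\mathcal{D}}\bigr)=0,
\]
which says that $\xi^{h_{0}}$ cannot touch $\partial\mathcal{D}$ without ever crossing into $\mathcal{D}^{c}$.

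I would prove the claim by introducing the stopping time $\tau^{*}=\inf\{\tau\in[t;1]:\xi^{h_{0}}(\tau)\in\partial\mathcal{D}\}$ and working locally around the random boundary point $\xi^{h_{0}}(\tau^{*})$. Since $\partial\mathcal{D}$ is smooth, there is a $C^{2}$ diffeomorphism $\phi$ defined on a neighborhood which straightens $\partial\mathcal{D}$ into $\{y_{1}=0\}$ and sends the interior of $\mathcal{D}$ into $\{y_{1}>0\}$. By It\^o's formula, the first coordinate $Y_{1}(\tau)=\phi_{1}(\xi^{h_{0}}(\tau))$ is a continuous semimartingale with non-degenerate Brownian component in a right neighborhood of $\tau^{*}$, and $Y_{1}(\tau^{*})=0$. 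After a local Dambis--Dubins--Schwarz time change, $Y_{1}$ behaves on such a neighborhood like a standard one-dimensional Brownian motion started from $0$ with a bounded-variation perturbation, which a.s.\ visits the negative half-line in every right neighborhood of the origin; this forces $\xi^{h_{0}}$ to enter $\mathcal{D}^{c}$ a.s.\ on $\{\tau^{*}\leq 1\}$.

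The main obstacle is this last step: making the reduction to one-dimensional Brownian motion rigorous in the presence of the drift $\int h_{0}(s)\,ds$ and the nonlinear coordinate change, and in particular ruling out the possibility that the drift or curvature corrections keep $Y_{1}$ nonnegative on a right neighborhood of $\tau^{*}$. A strong Markov argument for $w$ at $\tau^{*}$, combined with the fact that the bounded-variation perturbation is negligible relative to the Brownian increment on the scale $\sqrt{\delta}$, should close this gap; everything upstream of this claim is soft and purely topological.
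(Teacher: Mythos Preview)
Your proof is correct and follows the same overall strategy as the paper: both split into the easy case where the path $\xi^{h_0}$ stays strictly inside $\mathcal{D}$ (handled by compactness) and the case where $\xi^{h_0}$ reaches $\partial\mathcal{D}$, and both argue that in the latter case the path a.s.\ immediately crosses into $\R^d\setminus\overline{\mathcal{D}}$, so that nearby $h$ also give $W^{h}_{x,t}=1$. The difference is in how this crossing is established. The paper works directly with the inner unit normal $u$ to $\partial\mathcal{D}$ at the hitting point and invokes the law of the iterated logarithm for the scalar process $(w(\tau^*+s)-w(\tau^*),u)$; since the drift $\int h_0$ contributes $O(s)$ and the boundary curvature contributes $O(s)$ as well (both negligible against the $\sqrt{s\log\log(1/s)}$ Brownian fluctuation), the path visits both sides of $\partial\mathcal{D}$ in every right neighborhood of $\tau^*$. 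Your route via a boundary-straightening diffeomorphism, It\^o's formula, and a Dambis--Dubins--Schwarz time change reaches the same one-dimensional conclusion with heavier but more systematic machinery; it has the advantage that the curvature correction is absorbed cleanly into the finite-variation part rather than estimated by hand.

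One small point you should add: your argument that $\xi^{h_0}$ enters $\R^d\setminus\overline{\mathcal{D}}$ after $\tau^*$ requires $\tau^*<1$, so you still need to dispose of the event $\{\tau^*=1\}$ separately. The paper does this by observing that $\xi^{h_0}(1)=x+w(1)-w(t)+\int_t^1 h_0(s)\,ds$ is nondegenerate Gaussian for $t<1$, hence lands on the Lebesgue-null set $\partial\mathcal{D}$ with probability zero. Also, a minor notational quibble: since $\mathcal{D}$ is open, $\mathcal{D}^c$ is closed; what you intend throughout is the open set $\R^d\setminus\overline{\mathcal{D}}$.
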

\begin{proof}
Let $x\in \mathcal{D}, t\in[0;1]$ and $h\in C([0;1], \R^{d})$ be fixed. Define the stopping moment $\tau_{h}$ as a first moment from the interval $[t;1)$ when the process
\begin{equation*}
\begin{aligned}
x+w(r)-w(t)+\int_{t}^{r}h(s)ds, \quad r\in[t;1],
\end{aligned}
\end{equation*}
firstly hits $\partial \mathcal{D}$ or $1$ if it does not hit $\partial \mathcal{D}$ or hits it exactly at the time $1$, where
\begin{equation*}
\begin{aligned}
\tau_{h}=\inf\{r\geq t: x+w(r)-w(t)+\int_{t}^{r}h(s)ds\in \partial \mathcal{D} \wedge 1\}.
\end{aligned}
\end{equation*}
Fix $h_{0}\in C([0;1], \R^{d})$ and consider several case. Firstly, suppose that $\tau_{h_{0}}<1$. Note, that due to the strong Markov property the process
\begin{equation*}
\begin{aligned}
w(\tau+s)-w(\tau), \quad s\geq0,
\end{aligned}
\end{equation*}
is again a standard Brownian motion in $\R^{d}$. Consequently, for all $u\in\R^{d}, u\neq0$ the scalar process
\begin{equation*}
\begin{aligned}
(w(\tau+s)-w(\tau),u), \quad s\geq0,
\end{aligned}
\end{equation*}
satisfies the law of iterated logarithm. Then, since $\partial \mathcal{D}$ is smooth and the integral from $h_{0}$ also is a smooth function, with probability one the process
\begin{equation*}
\begin{aligned}
x+w(r)-w(t)+\int_{t}^{r}h(s)ds, \quad r\in[t;1],
\end{aligned}
\end{equation*}
visits $\mathcal{D}$ and $\R^{d}\backslash\overline{\mathcal{D}}$ at any small interval of the time after $\tau_{h_{0}}$. Consequently, for such $\omega$ that $\tau_{h_{0}}(\omega)<1$ and the law of iterated logarithm holds with the vector $u$ chosen as a unit normal to $\partial \mathcal{D}$ at the point
\begin{equation*}
\begin{aligned}
x+w(\tau_{h_{0}})-w(t)+\int_{\tau_{h_{0}}}^{t}h_{0}(s)ds,
\end{aligned}
\end{equation*}
there exists $\delta=\delta(\omega)>0$ such that for such $h\in C([0;1], \R^{d})$ for which
\begin{equation*}
\begin{aligned}
&\|h-h_{0}\|_{\infty}<\delta(\omega),\\
&W^{h}_{x,t}=1.
\end{aligned}
\end{equation*}
Note that
\begin{equation*}
\begin{aligned}
P\{x+w(1)-w(t)+\int_{t}^{1}h_{0}(s)ds\in\partial \mathcal{D}\}=0.
\end{aligned}
\end{equation*}
Consider last case when $r\in[t;1]$:
\begin{equation*}
\begin{aligned}
x+w(r)-w(t)+\int_{t}^{r}h_{0}(s)ds\notin\partial \mathcal{D}.
\end{aligned}
\end{equation*}
For such $\omega$ there exists $\delta=\delta(\omega)>0$ for which $\forall r\in[t;1]$:
\begin{equation*}
\begin{aligned}
\rho(x+w(r)-w(t)+\int_{t}^{r}h_{0}(s)ds, \partial D)>\delta,
\end{aligned}
\end{equation*}
where $\rho$ means the distance. Hence, for such $\omega$ there exists $\delta_{1}=\delta_{1}(\omega)>0$ that for $h\in C([0;1], \R^{d})$
\begin{equation*}
\begin{aligned}
&\|h-h_{0}\|_{\infty}<\delta_{1},\\
&W^{h}_{x,t}=1.
\end{aligned}
\end{equation*}
Lemma is proved.
\end{proof}

The same statement holds for certain integrators. Consider the linear operator $A$ in $L_{2}([0;1], \R^{d})$ of the following form
\begin{equation*}
\begin{aligned}
(Ah)(t)=h(t)+\int_{0}^{1}q(t,s)h(s)ds,
\end{aligned}
\end{equation*}
where $q$ is a continuous kernel on $[0;1]^{2}$ satisfying inequality
\begin{equation*}
\begin{aligned}
\max_{[0;1]^{2}}|q(t,s)|<1.
\end{aligned}
\end{equation*}
Define integrator
\begin{equation*}
\begin{aligned}
\eta(t)=\int_{0}^{1}A(\mathbf{1}_{[0;t]})(r)dw(r).
\end{aligned}
\end{equation*}

\begin{lemma}
For fixed $x\in \mathcal{D}, t\in[0;1], h_{0}\in C([0;1])$, the random variables $W^{A,h}_{x,t}$ constructed in the same way as $W^{h}_{x,t}$ converge a.s. to $W^{A,h_{0}}_{x,t}$ when $\|h-h_{0}\|\rightarrow0$.
\end{lemma}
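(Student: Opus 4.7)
I would follow the three-case argument of the preceding lemma, with the Wiener process $w$ replaced by the integrator $\eta$. The starting point is the decomposition
\begin{equation*}
\eta(r) = w(r) + F(r),\qquad F(r) := \int_{0}^{1}\Bigl(\int_{0}^{r}q(u,s)\,ds\Bigr)\,dw(u),
\end{equation*}
in which $F$ is a centred Gaussian process satisfying $\E(F(r_{2})-F(r_{1}))^{2}\le \|q\|_{\infty}^{2}(r_{2}-r_{1})^{2}$. Kolmogorov's criterion then yields a version of $F$ with paths H\"older of every exponent $<1$, so that $F(r)-F(\tau)=o(\sqrt{r-\tau})$ as $r\downarrow\tau$ almost surely. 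On small time scales, therefore, $\eta$ is a Brownian motion with a random drift of lower order.

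Define $\tau_{h_{0}}$ exactly as in the previous lemma but with $\eta$ in place of $w$, and split the sample space into three events: (a) $\tau_{h_{0}}<1$; (b) first hit occurs exactly at $r=1$; (c) no hit at all. Event (b) has probability zero: since $\|q\|_{\infty}<1$ the operator $A=I+K$ is invertible on $L_{2}([0;1])$, hence the Gaussian vector $\eta(1)-\eta(t)$ has a density on $\R^{d}$ and $\partial\mathcal{D}$ has Lebesgue measure zero in $\R^{d}$. Event (c) is handled as before: a positive margin $\delta(\omega)$ to the boundary is preserved by any $h$ with $\|h-h_{0}\|_{\infty}<\delta(\omega)/2$, because $\sup_{r\in[t;1]}|\int_{t}^{r}(h-h_{0})(s)\,ds|\le\|h-h_{0}\|_{\infty}$, which yields $W^{A,h}_{x,t}=0=W^{A,h_{0}}_{x,t}$.

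The substantive step is case (a). Let $n$ be the inner unit normal to $\partial\mathcal{D}$ at the contact point $z_{0}=x+\eta(\tau_{h_{0}})-\eta(t)+\int_{t}^{\tau_{h_{0}}}h_{0}(s)\,ds$. The normal component of the displacement for $r>\tau_{h_{0}}$ reads
\begin{equation*}
(w(r)-w(\tau_{h_{0}}),n)+(F(r)-F(\tau_{h_{0}}),n)+\Bigl(\int_{\tau_{h_{0}}}^{r}h_{0}(s)\,ds,\,n\Bigr),
\end{equation*}
and by the H\"older bound on $F$ together with the continuity of $h_{0}$ the last two terms are $o(\sqrt{r-\tau_{h_{0}}})$. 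Hence the sign of the normal component on small right-neighborhoods of $\tau_{h_{0}}$ is controlled by the Brownian term. Once one shows that this Brownian term takes both signs on every right-neighborhood of $\tau_{h_{0}}$, the trajectory enters $\R^{d}\setminus\overline{\mathcal{D}}$ strictly before time $1$, and the perturbation bound above produces $\delta(\omega)>0$ for which $\|h-h_{0}\|_{\infty}<\delta(\omega)$ forces $W^{A,h}_{x,t}=1=W^{A,h_{0}}_{x,t}$, exactly as in the preceding lemma.

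The main obstacle is the two-sided oscillation of $(w(\cdot)-w(\tau_{h_{0}}),n)$ at $\tau_{h_{0}}$: because $F$ anticipates the whole trajectory of $w$, the time $\tau_{h_{0}}$ is not a stopping time of the Brownian filtration, and the strong Markov/law of iterated logarithm argument of the previous lemma does not apply verbatim. I would circumvent this by fixing a countable dense family $\{n_{k}\}$ of unit vectors, observing that each scalar Brownian motion $(w(\cdot),n_{k})$ has almost surely only countably many one-sided local extrema, and concluding that $((w(r)-w(\tau),n_{k}))_{r\downarrow\tau}$ changes sign on every right-neighborhood of every $\tau$ outside a countable $\omega$-dependent set. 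Because $\tau_{h_{0}}$ has a joint continuous distribution with the trajectory of $w$, it avoids these exceptional sets a.s., and density of $\{n_{k}\}$ transfers the oscillation to the actual (random) direction $n$. This is the step I expect to require the most care; the rest of the proof then reduces to the perturbation bookkeeping already carried out in the previous lemma.
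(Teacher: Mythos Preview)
Your approach is both more laborious than necessary and has a genuine gap at the point you yourself flag as ``requiring the most care.'' The paper's proof is one line: since $A=I+K$ with $K$ an integral operator with continuous kernel and $\|K\|\le\|q\|_{\infty}<1$, the operator $A^{*}A-I=K+K^{*}+K^{*}K$ is Hilbert--Schmidt and $A$ is invertible, so the law of $\eta$ on $C([0;1],\R^{d})$ is equivalent to Wiener measure. The previous lemma exhibits, for each fixed $h_{0}$, a path event of full Wiener measure on which $h\mapsto W^{h}_{x,t}(\gamma)$ is continuous at $h_{0}$; equivalence of laws immediately gives that the same path event has full probability under the law of $\eta$, and since $W^{A,h}_{x,t}=W^{h}_{x,t}(\eta)$ the lemma follows. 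No case analysis is needed.

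The gap in your route is the sentence ``Because $\tau_{h_{0}}$ has a joint continuous distribution with the trajectory of $w$, it avoids these exceptional sets a.s.'' The countable set of one-sided local extrema of $(w(\cdot),n_{k})$ is a \emph{random} countable set determined by the same Brownian path that determines $\tau_{h_{0}}$ (through the anticipating drift $F$). Absolute continuity of the marginal law of $\tau_{h_{0}}$ says nothing about avoiding a random countable set measurable with respect to the same $\omega$: the trivial counterexample is $S(\omega)=\{\tau_{h_{0}}(\omega)\}$. To make your argument rigorous you would need to show that, conditionally on the random exceptional set, $\tau_{h_{0}}$ still has no atoms on it --- and you have no handle on that conditional law precisely because $F$ anticipates. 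The equivalence-of-laws argument bypasses this difficulty entirely, since it transfers the \emph{conclusion} of the previous lemma (a null set of paths) rather than trying to rerun its mechanism.
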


\begin{proof}
The proof follows from the statement of the previous lemma and the fact that now distribution of $w$ and $\eta$ in $C([0;1],\R^{d})$ are equivalent \cite{Vla15}.
\end{proof}

Next lemma shows that some time It$\hat{o}$-Wiener expansion can be a continuous with respect to initial noise.

Let $R$ be a finite dimensional bounded linear operator in $L_{2}([0;1])$. Denote by the same letter $R$ the operator in $L_{2}([0;1],\R^{d})$ which is acting as follows
\begin{equation*}
\begin{aligned}
Rh=(Rh^{1},Rh^{2},\cdots,Rh^{d}),
\end{aligned}
\end{equation*}
where
\begin{equation*}
\begin{aligned}
h(t)=(h^{1}(t),\cdots,h^{d}(t)).
\end{aligned}
\end{equation*}
Let $\alpha$ be a square-integrable functional from $w$.

\begin{lemma}\label{lemma4.3}
Suppose that the operator norm $\|R\|<1$, and
\begin{equation*}
\begin{aligned}
R=\sum_{k=1}^{m}\beta_{k}e_{k}\otimes e_{k}, \quad \{e_{k}\}\subset C^{1}([0;1]).
\end{aligned}
\end{equation*}
Then the It$\hat{o}$-Wiener series for $\Gamma(R)\alpha$ is a continuous function from $\Gamma(R)w$.
\end{lemma}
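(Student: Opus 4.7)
The plan is to show that $\Gamma(R)\alpha$, viewed as a random variable, is a continuous function of the finitely many Gaussian vectors $\xi_k=\int_0^1 e_k(s)\,dw(s)$, $k=1,\ldots,m$, and then to observe that these vectors can be recovered from the path $\Gamma(R)w$ by a continuous linear operation.

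I would begin with a reduction. Since $R=\sum_{k=1}^m\beta_k\, e_k\otimes e_k$ with real $\beta_k$ and real $e_k\in C^1([0;1])$ is self-adjoint, the spectral theorem lets us assume the $e_k$ are orthonormal, with $\beta_k$ the eigenvalues and $\max_k|\beta_k|=\|R\|<1$; the new eigenvectors remain in $C^1$ as linear combinations of the originals. Completing $\{e_k\}$ to an orthonormal basis of $L_2([0;1])$ and writing the It\^o--Wiener expansion of $\alpha$ in Hermite form,
\[
\alpha=\sum_{\mathbf n} c_{\mathbf n}\prod_k\frac{H_{n_k}(\xi_k)}{\sqrt{n_k!}},\qquad \sum_{\mathbf n}|c_{\mathbf n}|^2=\E|\alpha|^2<\infty,
\]
one sees that $R^{\otimes n}$ annihilates every basis tensor containing an index $>m$ and multiplies the remaining ones by $\prod\beta_{k_i}$. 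Hence the chaos expansion collapses to
\[
\Gamma(R)\alpha=\sum_{\mathbf n:\mathrm{supp}(\mathbf n)\subseteq\{1,\ldots,m\}} c_{\mathbf n}\prod_{k=1}^{m}\beta_k^{n_k}\frac{H_{n_k}(\xi_k)}{\sqrt{n_k!}}=:F(\xi_1,\ldots,\xi_m).
\]

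The main step, and the principal obstacle, is to verify that the partial sums $F_N$ of this series converge uniformly on compact subsets of $\R^m$. I would apply Cauchy--Schwarz to the tail and factorize the resulting sum over $k$:
\[
|F(\xi)-F_N(\xi)|^2\le\Bigl(\sum_{|\mathbf n|>N}|c_{\mathbf n}|^2\Bigr)\cdot\prod_{k=1}^{m}\sum_{n\ge0}\frac{\beta_k^{2n}H_n(\xi_k)^2}{n!}.
\]
By Mehler's identity with parameter $\rho=\beta_k^2<1$, each factor in the product equals $(1-\beta_k^4)^{-1/2}\exp\bigl(\beta_k^2\xi_k^2/(1+\beta_k^2)\bigr)$, finite and continuous in $\xi_k$. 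Thus the second factor above is uniformly bounded on any compact set of $\xi\in\R^m$, while the first tends to zero because $\alpha\in L_2$. Uniform convergence on compacts yields continuity of $F$. The strict inequality $\|R\|<1$ is essential: at $\|R\|=1$ the inner series diverges and this estimate gives no continuous realization.

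Finally, $\Gamma(R)w(t)=\sum_{k=1}^{m}\beta_k E_k(t)\xi_k$ with $E_k(t)=\int_0^t e_k$, and the $C^2$ curves $\{\beta_k E_k:\beta_k\neq0\}$ are linearly independent on $[0;1]$. Hence a bounded linear functional on $C([0;1],\R^d)$ in sup-norm recovers each coordinate $\xi_k$ from the path, and composing with $F$ realizes $\Gamma(R)\alpha$ as a continuous function of $\Gamma(R)w$.
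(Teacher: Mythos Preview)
Your argument is correct and complete: the reduction to orthonormal eigenvectors, the collapse of $\Gamma(R)\alpha$ to a Hermite series in the $m$ (or $md$) independent Gaussians $\xi_k^j$, the Cauchy--Schwarz/Mehler bound giving uniform convergence on compacts, and the linear recovery of the $\xi_k^j$ from the path $\Gamma(R)w$ all go through as stated. (Your notation tacitly treats $d=1$; in the $d$-dimensional setting one simply replaces the $\xi_k$ by the $md$ scalar variables $\xi_k^j$ and the Mehler product runs over all of them.)

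The paper, however, does not argue by explicit Hermite estimates. It observes that $\Gamma(R)\alpha$ is a functional of the finitely many Gaussians $\xi_k^j$, invokes the fact that $\|R\|<1$ makes $\Gamma(R)$ a smoothing operator---so $\Gamma(R)\alpha$ is infinitely stochastically differentiable---and then appeals to a Sobolev embedding theorem on the finite-dimensional Gaussian space to conclude continuity. Your route is more elementary and self-contained: it avoids Malliavin calculus and Sobolev embedding, and produces an explicit quantitative bound showing locally uniform convergence of the It\^o--Wiener series. The paper's route is shorter to state but leans on heavier machinery; yours makes transparent exactly where the hypothesis $\max_k|\beta_k|<1$ is consumed (namely, convergence of the Mehler kernel on the diagonal).
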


\begin{proof}
Now $\Gamma(R)w$ is a Gaussian process which depends on a finite number of standard independent Gaussian variables of the kind
\begin{equation*}
\begin{aligned}
\xi^{j}_{k}=\int_{0}^{1}e_{k}(t)dw^{j}(t), \quad k=1,\cdots,m, \quad j=1,\cdots,d,
\end{aligned}
\end{equation*}
where the orthonormal system $\{e_{k};k=1,\cdots,m\}$ used. So, due to the condition $\|R\|<1$ the random variable $\Gamma(R)\alpha$ is infinitely many times stochastically differentiable \cite{Vla15}. Consequently, due to Sobolev embedding theorem \cite{Vla15}, $\Gamma(R)\alpha$ can be represented as a continuous functional
\begin{equation*}
\begin{aligned}
\Gamma(R)\alpha=F(\xi^{j}_{k}, \quad k=1,\cdots,m, \quad j=1,\cdots,d),\quad F\in C(\R^{m\times d}).
\end{aligned}
\end{equation*}
Lemma is proved.
\end{proof}

Now we can make rigorous the arguments presented in the introduction. Let $R$ be a finite-dimensional operator in $L_{2}([0;1])$ with the norm $\|R\|<1$. Take the continuous linear operator $A$ such that for $\forall h\in L_{2}([0;1])$,
\begin{equation*}
\begin{aligned}
\|Ah\|^{2}+\|Rh\|^{2}=1.
\end{aligned}
\end{equation*}
Consider integrator $\eta$ constructed with the help of $A$. The following theorem holds.

\begin{theorem}
For arbitrary $x\in \mathcal{D}$,
\begin{equation*}
\begin{aligned}
P\{\exists\tau\in[0;1]:x+\eta(\tau)\in\partial \mathcal{D}\}=\Gamma(R)W_{x,0}|_{w=0}.
\end{aligned}
\end{equation*}
\end{theorem}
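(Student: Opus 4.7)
The plan is to make rigorous the formal identity $\E\Phi(\eta)=\E(\Phi(\widetilde w)\mid y=0)$ from the introduction in the present setting with $\Phi=W_{x,0}$, and then identify the continuous version of the conditional expectation with $\Gamma(R)W_{x,0}\big|_{w=0}$. First I would enlarge the probability space by an independent Wiener process $w'$ on $\R^{d}$ and set $y(t)=\int_{0}^{1}R(\mathbf{1}_{[0;t]})(s)\,dw'(s)$. The relation $\|Ah\|^{2}+\|Rh\|^{2}=\|h\|^{2}$ (which is what the equality in the theorem statement means) implies that $\widetilde w=\eta+y$ is a standard $d$-dimensional Wiener process, while $\eta$ and $y$ are independent because they are built from the independent processes $w$ and $w'$. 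In particular
\[
P\{\exists\tau\in[0;1]:x+\eta(\tau)\in\partial\mathcal{D}\}=\E\,W_{x,0}(\eta).
\]

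Since $R$ has finite rank $m$, the process $y$ is a smooth (pathwise) function of the $md$ independent Gaussian variables $\xi^{j}_{k}$ appearing in Lemma~\ref{lemma4.3}; the null set $\{y=0\}$ coincides with $\{\xi^{j}_{k}=0\text{ for all }k,j\}$ and corresponds to a genuine finite-dimensional conditioning. The previous two lemmas of this section supply the required continuity: applying the continuity of the hitting functional in the drift perturbation to the path $y$ (which, because $\{e_{k}\}\subset C^{1}$, is almost surely $C^{2}$), one obtains a continuous version of $y\mapsto\E\bigl(W_{x,0}(\widetilde w)\mid y\bigr)$ via dominated convergence. By the independence of $\eta$ and $y$, this continuous version evaluated at $y=0$ equals $\E\,W_{x,0}(\eta)$, giving the left-hand side of the theorem.

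To identify the continuous version with $\Gamma(R)W_{x,0}\big|_{w=0}$, I would first verify it on the total family of stochastic exponents. Writing $\int h\,d\widetilde w=\int Ah\,dw+\int Rh\,dw'$ and using the independence of $w$ and $w'$ gives
\[
\E\bigl(\varepsilon(h)(\widetilde w)\mid y\bigr)=\varepsilon(Rh)(w')=\Gamma(R)\varepsilon(h)(w'),
\]
and the right-hand side is $\sigma(y)$-measurable precisely because $R$ is finite rank. By linearity and the $L^{2}$-density of stochastic exponents the identity $\E\bigl(\alpha(\widetilde w)\mid y\bigr)=\Gamma(R)\alpha(w')$ extends to $\alpha=W_{x,0}$. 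Lemma~\ref{lemma4.3} ensures that $\Gamma(R)W_{x,0}(w')$ itself has a continuous version in the variables $\xi^{j}_{k}$, so the two continuous versions coincide everywhere. Setting $y=0$ (equivalently $\xi^{j}_{k}=0$ for all $k,j$) on both sides and interpreting this on the right via the Hermite/Wick calculation recalled in the introduction yields the symbolic substitution $w=0$ in the It\^o--Wiener series of $\Gamma(R)W_{x,0}$, which is the right-hand side of the theorem.

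The main obstacle is the joint-continuity step: it is not enough to know that each side of $\E(W_{x,0}(\widetilde w)\mid y)=\Gamma(R)W_{x,0}(w')$ has a continuous version separately, one needs those continuous versions to agree \emph{pointwise} so that evaluation at the probability-zero event $\{y=0\}$ is unambiguous. This is why Lemma~\ref{lemma4.3} together with the drift-continuity of the hitting functional, rather than any weaker $L^{2}$-identification of conditional expectations, is the crucial input.
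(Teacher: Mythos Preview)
Your proposal is correct and follows essentially the same route as the paper: introduce the independent auxiliary process $y$ built from $R$ and $w'$, use the drift-continuity lemma to give the conditional expectation $\E(W_{x,0}(\widetilde w)\mid y)$ a continuous version in the finite-dimensional variables $\xi^{j}_{k}$, use Lemma~\ref{lemma4.3} for the same on $\Gamma(R)W_{x,0}(w')$, identify the two a.s.\ via the stochastic-exponent calculation from the introduction, and then evaluate at $\xi^{j}_{k}=0$. Your explicit remark that pointwise agreement of the two continuous versions is what legitimizes evaluation at the null event is exactly the point the paper's proof is using, stated a bit more carefully than in the paper itself.
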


\begin{proof}
Consider, as in introduction, new standard d-dimensional process $w'$ independent from $w$ and construct new integrator
\begin{equation*}
\begin{aligned}
\zeta(t)=\int_{0}^{1}R(\mathbf{1}_{[0;t]})(s)dw'(s).
\end{aligned}
\end{equation*}
Then $\zeta$ has smooth trajectories. So, due to the lemma \ref{lemma2.2}, conditional expectation
\begin{equation*}
\begin{aligned}
\E (W^{\widetilde{w}}_{x,0}|\zeta),
\end{aligned}
\end{equation*}
is a continuous function of $\zeta$. From other side, by lemma \ref{lemma4.3}
\begin{equation*}
\begin{aligned}
\Gamma(R)W^{w'}_{x,0},
\end{aligned}
\end{equation*}
is a continuous function of the random variables
\begin{equation*}
\begin{aligned}
\xi^{j}_{k}=\int_{0}^{1}e_{k}dw'^{j}(t), \quad k=1,\cdots,m, \quad j=1,\cdots,d.
\end{aligned}
\end{equation*}
Hence from equality
\begin{equation*}
\begin{aligned}
\E (W^{\widetilde{w}}_{x,0}|\zeta)=\Gamma(R)W^{w'}_{x,0},
\end{aligned}
\end{equation*}
one can conclude the equality under $\xi^{j}_{k}=0, \quad k=1,\cdots,m,\quad  j=1,\cdots,d$ which is the desired statement. Theorem is proved.
\end{proof}

Now use the It$\hat{o}$-Wiener expansion of $W_{x,0}$ to obtain a series representation for hitting probability of integrator. Due to representation of the operator $R$ the It$\hat{o}$-Wiener expansion of $\Gamma(R)W_{x,0}$ can be simplified. We will do it in the case $d=1$ and then give the general formula just to avoid long calculation. So, let us suppose that $d=1$ and for function $a_{n}\in L_{2}([0;1])$ consider
\begin{equation*}
\begin{aligned}
&\int_{0}^{1}\stackrel{n}{\cdots}\int_{0}^{1}(R^{\otimes n}a_{n})(t_{1},\cdots,t_{n})dw(t_{1}),\cdots,dw(t_{n})\\
&=\int_{0}^{1}\stackrel{n}{\cdots}\int_{0}^{1}\sum_{i_{1},\cdots,i_{n}=1}^{m}\beta_{i_{1}}\cdots\beta_{i_{n}}\int_{0}^{1}\stackrel{n}{\cdots}\int_{0}^{1}a_{n}(s_{1},\cdots,s_{n})\\
&\cdot e_{i_{1}}(s_{1})\cdots e_{i_{n}}(s_{n})ds_{1}\cdots ds_{n}e_{i_{1}}(t_{1})\cdots e_{i_{n}}(t_{n})dw(t_{1}),\cdots,dw(t_{n}).
\end{aligned}
\end{equation*}
For further transformations recall notation
\begin{equation*}
\begin{aligned}
\xi_{k}=\int_{0}^{1}e_{k}(t)dw(t_{1}), \quad k=1,\cdots,m.
\end{aligned}
\end{equation*}
Then the stochastic integral can be expressed in terms of Hermite polynomials from the variables $\xi_{k},k=1,\cdots,m$. Denote for the sequence of indexes $i_{1},\cdots,i_{n}$ by the $r_{1},\cdots,r_{n}$ the numbers which indicate how many times $1$ repeats among $i_{1},\cdots,i_{n}$, $2$ repeats among $i_{1},\cdots,i_{n}$ and so on. Then
\begin{equation*}
\begin{aligned}
\int_{0}^{1}\stackrel{n}{\cdots}\int_{0}^{1}e_{i_{1}}(t_{1})\cdots e_{i_{n}}(t_{n})dw(t_{1}),\cdots,dw(t_{n})=H_{r_{1}}(\xi_{1})\cdots H_{r_{m}}(\xi_{m}).
\end{aligned}
\end{equation*}
Introduce the coefficients
\begin{equation*}
\begin{aligned}
a^{n}_{r_{1}\cdots r_{m}}=\int_{0}^{1}\stackrel{n}{\cdots}\int_{0}^{1}a_{n}(t_{1}\cdots t_{n})e_{i_{1}}(t_{1})\cdots e_{i_{n}}(t_{n})dt_{1},\cdots,dt_{n}.
\end{aligned}
\end{equation*}
Finally
\begin{equation*}
\begin{aligned}
&\int_{0}^{1}\stackrel{n}{\cdots}\int_{0}^{1}(R^{\otimes n}a_{n})(t_{1},\cdots,t_{n})dw(t_{1}),\cdots,dw(t_{n})\\
&=\sum_{r_{1}+\cdots+r_{m}=n}\frac{n!}{r_{1}!\cdots r_{m}!}\beta_{1}^{r_{1}}\cdots\beta_{m}^{r_{m}}a^{n}_{r_{1}\cdots r_{m}}H_{r_{1}}(\xi_{1})\cdots H_{r_{m}}(\xi_{m}).
\end{aligned}
\end{equation*}
Let us calculate the second moment of the stochastic integral using the obtained representation.
\begin{equation*}
\begin{aligned}
&\E (\int_{0}^{1}\stackrel{n}{\cdots}\int_{0}^{1}(R^{\otimes n}a_{n})(t_{1},\cdots,t_{n})dw(t_{1}),\cdots,dw(t_{n}))^{2}\\
&=\sum_{r_{1}+\cdots+r_{m}=n}\frac{n!^{2}}{r_{1}!\cdots r_{m}!}\beta_{1}^{2r_{1}}\cdots\beta_{m}^{2r_{m}}(a^{n}_{r_{1}\cdots r_{m}})^{2}.
\end{aligned}
\end{equation*}
Note, that
\begin{equation*}
\begin{aligned}
\E \alpha^{2}\geq\sum_{n=0}^{\infty}\sum_{r_{1}+\cdots+r_{m}=n}\frac{n!^{2}}{r_{1}!\cdots r_{m}!}(a^{n}_{r_{1}\cdots r_{m}})^{2}.
\end{aligned}
\end{equation*}
Recall the estimation for Hermite polynomials \cite{Gab98}
\begin{equation*}
\begin{aligned}
\max_{[-1;1]}|H_{n}(x)|\leq c\cdot n^{-\frac{1}{4}} \cdot \sqrt{n!}, \quad n\geq1.
\end{aligned}
\end{equation*}
Hence for $|\xi_{k}|\leq1, k=1,\cdots,m$
\begin{equation*}
\begin{aligned}
&\sum_{r_{1}+\cdots+r_{m}=n}\frac{n!}{r_{1}!\cdots r_{m}!}|\beta_{1}^{r_{1}}\cdots\beta_{m}^{r_{m}}||a^{n}_{r_{1}\cdots r_{m}}||H_{r_{1}}(\xi_{1})\cdots H_{r_{m}}(\xi_{m})|\\
&\leq c ^{m}\sum_{r_{1}+\cdots+r_{m}=n}\frac{n!}{\sqrt{r_{1}!\cdots r_{m}!}}|a^{n}_{r_{1}\cdots r_{m}}||\beta_{1}|^{r_{1}}\cdots|\beta_{m}|^{r_{m}}\\
&\leq c ^{m}(\E \alpha^{2})^{\frac{1}{2}}\sum_{r_{1}+\cdots+r_{m}=n}|\beta_{1}|^{r_{1}}\cdots|\beta_{m}|^{r_{m}}.
\end{aligned}
\end{equation*}
Note that
\begin{equation*}
\begin{aligned}
&\sum_{n=0}^{\infty}\sum_{r_{1}+\cdots+r_{m}=n}|\beta_{1}|^{r_{1}}\cdots|\beta_{m}|^{r_{m}}\\
&=(\sum_{r_{1}=0}^{\infty}|\beta_{1}|^{r_{1}})\cdots(\sum_{r_{m}=0}^{\infty}|\beta_{m}|^{r_{m}})<+\infty.
\end{aligned}
\end{equation*}
Consequently It$\hat{o}$-Wiener series for $\Gamma(R)\alpha$ now converges uniformly on the set $|\xi_{k}|\leq1, k=1,\cdots,m$. Hence
\begin{equation*}
\begin{aligned}
\Gamma(R)\alpha|_{w=0}=\sum_{n=0}^{\infty}\sum_{r_{1}+\cdots+r_{m}=n}\frac{n!}{r_{1}!\cdots r_{m}!}a^{n}_{r_{1}\cdots r_{m}}H_{r_{1}}(0)\cdots H_{r_{m}}(0).
\end{aligned}
\end{equation*}

Now we can formulate the main statement of the paper. Consider operator $R$ as above. Suppose that the kernels $g^{n}_{x}\in L_{2}([0;1]^{n},\R^{d\times n})$ are obtained from the It$\hat{o}$-Wiener expansion for $W_{x,0}$. Let $\eta$ be the integrator based on the operator $A$ such that, for $\forall h\in L_{2}([0;1])$:
\begin{equation*}
\begin{aligned}
\|Ah\|^{2}+\|Rh\|^{2}=\|h\|^{2}.
\end{aligned}
\end{equation*}
Denote by
\begin{equation*}
\begin{aligned}
g^{n}_{x,R}=R^{\otimes d\times n}g^{n}_{x}.
\end{aligned}
\end{equation*}
Then $g^{n}_{x,R}$ is a kernel from $L_{2}([0;1], \R^{d})$. More over,
\begin{equation*}
\begin{aligned}
&\int_{0}^{1}\stackrel{n}{\cdots}\int_{0}^{1}(g^{n}_{x,R}(t_{1},\cdots,t_{n}),dw(t_{1})\otimes\cdots\otimes dw(t_{n}))\\
&=\sum_{k_{1}+\cdots+k_{d}=n}\sum_{j^{1}_{1}+\cdots+j^{1}_{m}=k_{1}}\cdots\sum_{j^{d}_{1}+\cdots+j^{d}_{m}=k_{d}}\\
&\frac{n!}{j^{1}_{1}!\cdots j^{1}_{m}!\cdots j^{d}_{1}!\cdots j^{d}_{m}!}\alpha_{j^{1}_{1}\cdots j^{1}_{m}\cdots j^{d}_{1}\cdots j^{d}_{m}}H_{j^{1}_{1}}(\xi_{1}^{1})\cdots H_{j^{1}_{m}}(\xi_{1}^{m})\cdots H_{j^{d}_{1}}(\xi_{d}^{1})\cdots H_{j^{d}_{m}}(\xi_{d}^{m}),
\end{aligned}
\end{equation*}
where $\{\alpha_{j^{1}_{1}\cdots j^{1}_{m}\cdots j^{d}_{1}\cdots j^{d}_{m}}\}$ are the coordinates of $g^{n}_{x,R}$ in the basis $\{e_{1},\cdots,e_{m}\}$ and, as before
\begin{equation*}
\begin{aligned}
\xi^{j}_{k}=\int_{0}^{1}e_{k}dw^{j}(t), \quad k=1,\cdots,m, \quad j=1,\cdots,d.
\end{aligned}
\end{equation*}
Finally, the following statement holds.

\begin{theorem}
\begin{equation*}
\begin{aligned}
&P\{\exists\tau\in[0;1]:x+\eta(\tau)\in\partial \mathcal{D}\}\\
&=\sum_{n=0}^{\infty}\sum_{k_{1}+\cdots+k_{d}=n}\sum_{j^{1}_{1}+\cdots+j^{1}_{m}=k_{1}}\cdots\sum_{j^{d}_{1}+\cdots+j^{d}_{m}=k_{d}}\\
&\frac{n!}{j^{1}_{1}!\cdots j^{1}_{m}!\cdots j^{d}_{1}!\cdots j^{d}_{m}!}\alpha_{j^{1}_{1}\cdots j^{1}_{m}\cdots j^{d}_{1}\cdots j^{d}_{m}}H_{j^{1}_{1}}(0)\cdots H_{j^{1}_{m}}(0)\cdots H_{j^{d}_{1}}(0)\cdots H_{j^{d}_{m}}(0).
\end{aligned}
\end{equation*}
\end{theorem}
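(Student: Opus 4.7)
The plan is to chain the preceding theorem, which identifies the hitting probability with $\Gamma(R)W_{x,0}|_{w=0}$, to an explicit term-by-term computation of that functional as a Hermite-polynomial series in the Gaussian variables $\xi^{j}_{k}=\int_{0}^{1}e_{k}(t)\,dw^{j}(t)$.

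First, I would substitute the It\^o-Wiener expansion $W_{x,0}=\sum_{n=0}^{\infty}\int (g^{n}_{x},dw^{\otimes n})$ produced by the Duhamel series of Section 3. Since $R$ is the finite-rank operator $\sum_{k=1}^{m}\beta_{k}e_{k}\otimes e_{k}$ acting coordinate-wise on $L_{2}([0;1],\R^{d})$, applying $\Gamma(R)$ replaces each kernel $g^{n}_{x}$ by $g^{n}_{x,R}=R^{\otimes dn}g^{n}_{x}$. Thus the proof reduces to identifying, level by level, the $n$-fold stochastic integral of $g^{n}_{x,R}$ as a polynomial in the $\xi^{j}_{k}$ and then evaluating at $w=0$.

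Second, I would expand the $n$-th integral in the orthonormal basis $\{e_{k}\}_{k=1}^{m}$ and interpret the result through Hermite polynomials. The paper already performs this expansion when $d=1$, producing the multinomial formula with coefficient $n!/(r_{1}!\cdots r_{m}!)$ and product $H_{r_{1}}(\xi_{1})\cdots H_{r_{m}}(\xi_{m})$. For general $d$ I would first group the tensor index $(\varepsilon_{1},\ldots,\varepsilon_{n})\in\{1,\ldots,d\}^{n}$ by the coordinate multiplicities $k_{1}+\cdots+k_{d}=n$, and then each coordinate slice by basis-index multiplicities $j^{j}_{1}+\cdots+j^{j}_{m}=k_{j}$; the combinatorics of this double grouping yields precisely the multinomial coefficient $n!/(j^{1}_{1}!\cdots j^{d}_{m}!)$ and the product of Hermite polynomials $H_{j^{j}_{k}}(\xi^{j}_{k})$ stated just above the theorem.

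Third, I would justify setting $w=0$, i.e.\ substituting $\xi^{j}_{k}=0$ termwise. For this the uniform-convergence argument carried out for $d=1$ transfers essentially verbatim: the bound $\max_{[-1;1]}|H_{n}|\leq c\,n^{-1/4}\sqrt{n!}$, Cauchy--Schwarz against $\E W_{x,0}^{2}\leq 1$, and the strict contraction $\|R\|<1$ (so $|\beta_{k}|<1$ for all $k$) together produce a geometric majorant bounded by $c^{md}\prod_{j,k}(1-|\beta_{k}|)^{-1}$ on the cube $\{|\xi^{j}_{k}|\leq 1\}$. Combined with Lemma \ref{lemma4.3}, this proves that the series for $\Gamma(R)W_{x,0}$ is a continuous function of the finite collection $\{\xi^{j}_{k}\}$, so its value at zero is obtained by termwise substitution, yielding the stated formula.

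The main obstacle I expect is combinatorial rather than analytic: carefully tracking the nested multi-index structure when the tensor index of $(\R^{d})^{\otimes n}$ is decomposed first by the $d$ coordinates and then by the $m$ basis functions, and verifying that no multinomial factors are lost. All the genuinely analytic content -- convergence of the It\^o-Wiener expansion of $W_{x,0}$, continuity of $\Gamma(R)W_{x,0}$ on $\R^{md}$, and the rigorous identification of $w=0$ with $\xi^{j}_{k}=0$ -- is already encapsulated in the preceding theorem and in Lemma \ref{lemma4.3}.
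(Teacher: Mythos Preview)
Your proposal is correct and follows essentially the same route as the paper: invoke the preceding theorem to reduce to computing $\Gamma(R)W_{x,0}|_{w=0}$, expand each $R^{\otimes dn}g^{n}_{x}$ in the finite basis $\{e_k\}$ to obtain the multinomial/Hermite representation, and use the bound $\max_{[-1;1]}|H_n|\le c\,n^{-1/4}\sqrt{n!}$ together with $|\beta_k|<1$ to get uniform convergence on $\{|\xi^{j}_{k}|\le 1\}$, which legitimizes the termwise substitution $\xi^{j}_{k}=0$. The paper in fact only writes out the $d=1$ case in detail and then states the general-$d$ formula ``to avoid long calculation,'' so your explicit description of the two-level grouping (first by coordinate multiplicities $k_1+\cdots+k_d=n$, then by basis multiplicities $j^{j}_1+\cdots+j^{j}_m=k_j$) is a welcome expansion of what the paper leaves implicit.
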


\section*{Declarations}

\noindent \textbf{Ethics approval}

\noindent Not applicable.

\noindent \textbf{Competing interests}

\noindent The author declares that there is no conflict of interest or competing interest.

\noindent \textbf{Authors' contributions}

\noindent All authors contributed equally to this work.

\noindent \textbf{Funding}

\noindent There is no funding source for this manuscript.

\noindent \textbf{Availability of data and materials}

\noindent Data sharing is not applicable to this article as no data sets were generated or analyzed during the current study.

\end{document}